%
%
%

\def\Bbb{\mathbb}

\def\dist{{\rm{dist}}}

\def\dist{{\rm{dist}}}

\def\Bbb{\mathbb}
\def\reals{\Bbb R}
\def\complex{\Bbb C}
\def\disk{\Bbb D}

\def\integers{\Bbb Z}

\def\rhp{{\Bbb H}_r}

\def\classS{\cal S}
\def\classB{\cal B}

\def\disk{\Bbb D}
\def\cal{\mathcal}

\documentclass[12pt]{amsart}  
\usepackage{amssymb}    

\usepackage{graphicx,amssymb,amsthm,verbatim,amsfonts}

\theoremstyle{plain}                    
\newtheorem{thm}{Theorem}[section]

\newtheorem{lemma}[thm]{Lemma}

\newcounter{ques}
   
\numberwithin{equation}{section}

\addtolength{\textwidth}{1.0in}
\addtolength{\leftmargin}{-.5in} 
\hoffset -.5in

\begin{document}
\baselineskip=18pt


%

\title [  The order conjecture fails in $\classS$ ]
          { The order conjecture fails in $\classS$ }

\subjclass{Primary: 30D15  Secondary: 30C62, 37F10 }
\keywords{
    Order conjecture, area conjecture, order of 
    growth, Speiser class, Eremenko-Lyubich class, entire
    functions, quasiconformal functions, quasiconformal folding, 
    singular sets, Shabat functions, finite-type, bounded-type
}
\author {Christopher J. Bishop}
\address{C.J. Bishop\\
         Mathematics Department\\
         SUNY at Stony Brook \\
         Stony Brook, NY 11794-3651}
\email {bishop@math.sunysb.edu}
\thanks{The  author is partially supported by NSF Grant DMS 13-05233.
        }

\date{June 26, 2013}
\maketitle


\begin{abstract}
We construct an entire function  $f$  with only  three
singular  values whose order can change under a 
quasiconformal equivalence.
\end{abstract}

\clearpage


\setcounter{page}{1}
\renewcommand{\thepage}{\arabic{page}}
\section{Introduction} \label{Intro}

If $f$ is entire, let $S(f)$ denote the singular set of $f$, that is, 
  the closure of its  critical values and finite asymptotic values. The class 
of entire functions for which $S(f)$ is a finite set  was  denoted
 $\classS$  by Eremenko and Lyubich \cite{MR1196102} in honor of Andreas Speiser. 
We let $\classS_n$ denote the set of 
entire functions with exactly $n$ singular values.
The Speiser class is a subclass of the Eremenko-Lyubich 
class  $\classB$, consisting of those entire functions 
whose singular values are a bounded set in $\complex$.
The two classes are also called ``finite-type'' and 
``bounded-type''  in holomorphic dynamics.

A natural measure of the growth of an entire function is 
its order: 
$$ 
\rho(f) = \limsup_{z \to \infty} \frac { \log_+ \log_+ |f(z)|}{ \log_+ |z|},$$
where $\log_+ r = \max (0, \log r)$. The natural parameter spaces 
of entire functions (at least for dynamical considerations) are the
quasiconformally equivalent functions: we say   $f,F$  are 
equivalent if there are quasiconformal maps $\phi, \psi$
of the plane    so that
$$   \psi  \circ f = F \circ \phi.$$
Eremenko and Lyubich \cite{MR1196102} proved that for $f \in \classS_n$, 
the collection of  functions equivalent  to $f$  forms a $n+2$ complex  dimensional 
manifold, and it is natural to ask if the order is constant
on each such manifold.  This is true for   $n=2$ 
 (e.g.  Proposition 2.2 of \cite{Epstein-Rempe} shows $\psi$, $\phi$ 
can be chosen affine in this case), but we will show:

\begin{thm} \label{Order conj}
There are equivalent  functions in $\classS_3$ with different orders.
\end{thm}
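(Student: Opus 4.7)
The plan is to use the author's quasiconformal folding technique, which assigns an entire function to an appropriate decorated planar graph and determines it up to quasiconformal equivalence from purely combinatorial data, and to exploit a flexibility present in $\classS_3$ but absent in $\classS_2$. Heuristically, the reason order is rigid for two singular values is that an equivalence can be realized by an affine pair $\psi,\phi$ (Proposition 2.2 of \cite{Epstein-Rempe}), so $\rho$ is preserved; with three singular values there is an extra complex parameter and the equivalence is only truly quasiconformal, allowing geometric distortion of scales.

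First, I would specify a decorated graph model: a backbone tree $T$ together with finite gadgets arranged so that the folding construction produces an entire function with exactly three singular values (say $\{-1,0,1\}$). The combinatorial type of the model, together with the planar isotopy class of the embedding, determines the resulting entire function up to quasiconformal equivalence. Then I would choose two embeddings $T_1,T_2\subset\complex$ of the \emph{same} abstract decorated graph, arranged so that the number $n_i(r)$ of vertices of $T_i$ in the disk of radius $r$ grows at two genuinely different rates as $r\to\infty$. Applying quasiconformal folding to each produces entire functions $f_1,f_2\in\classS_3$, and the planar homeomorphism carrying $T_1$ onto $T_2$ promotes through the folding construction to a quasiconformal equivalence between $f_1$ and $f_2$.

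The final step is to show $\rho(f_1)\ne\rho(f_2)$. In the folding construction each complementary component $\Omega$ of $T_i$ is mapped by $f_i$ as the composition of a conformal map to a half-plane followed by an exponential; consequently $\log|f_i(z)|$ on tracts escaping to infinity is comparable to a geometric height computed from the graph. A standard consequence is that $\rho(f_i)$ is controlled by the growth rate of $n_i(r)$, so choosing $n_1$ and $n_2$ to grow with distinct exponents yields the desired order gap.

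The main obstacle lies in arranging both embeddings to satisfy the bounded-geometry and $\tau$-type hypotheses needed by folding, so that the quasiconformal corrections are uniformly bounded and yield a genuine QC equivalence between $f_1$ and $f_2$, while simultaneously making $T_1$ and $T_2$ differ geometrically by enough to change the order. Reconciling these competing constraints, and rigorously tying the order of each $f_i$ to the edge-growth rate of the corresponding tree via the folding estimates, is the technical heart of the argument.
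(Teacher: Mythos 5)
There is a genuine gap: the proposal never supplies the mechanism that actually changes the order, and that mechanism is the entire content of the theorem. You reduce everything to ``choose two embeddings $T_1,T_2$ of the same decorated graph whose vertex-counting functions $n_i(r)$ grow at different rates,'' and then assert that the order of each folded function is ``controlled by the growth rate of $n_i(r)$.'' Neither half of this survives scrutiny. First, the order of the resulting entire function is governed by the conformal geometry of the tracts (extremal distances along the complementary components), not by vertex counting; in the paper the order of $f$ is finite precisely because $\Omega$ contains a half-strip, and this is proved by an extremal length argument. Second, and more fundamentally, exhibiting two bounded-geometry realizations of the same combinatorics whose conformal structures at infinity genuinely differ is exactly the problem to be solved --- for two singular values it is impossible, and you give no reason why it becomes possible for three beyond ``there is an extra complex parameter.'' As written, the proposal assumes the conclusion.

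The paper's actual mechanism is specific and worth contrasting with your sketch. The function $f$ is designed so that its critical points over the third singular value $0$ are exactly the roots of the trees, sitting at the integer points of $\partial S$ in $\cosh$-coordinates, and so that the $f$-preimage of the punctured neighborhood $W=D(1,2)\setminus[0,1]$ of $0$ contains an unbounded chain of components joining consecutive such critical points. One then deforms the target only inside $W$ (stretching the slit by the radial map $r_K$) and pulls back: the resulting dilatation is supported on that unbounded chain, so it decreases the extremal length of the path families $\Gamma_n$ joining a fixed crosscut to the $n$th tree by a uniform factor $K$. This forces the normalizing map $\sigma$ from the measurable Riemann mapping theorem to satisfy $|\sigma(z)|=O(\sqrt{|z|})$, whence $\rho(F)\geq 2\rho(f)$. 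The extra singular value is exploited not as a free parameter but as a place where a \emph{compactly supported} target deformation has a pullback that percolates to infinity. To salvage your approach you would need to identify the analogue of this chain and prove a quantitative extremal-length estimate (not a vertex-counting one) tying the geometry of each realization to the order.
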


 More generally,    
we say that $f$ and $F$ are topologically
 equivalent if $\psi  \circ f = F \circ \phi$
for some pair of homeomorphisms $\psi, \phi$ of $\reals^2$ to itself.
The order conjecture asks if  $\rho(f) = \rho(F)$ whenever $f$ and 
$F$ are topologically equivalent. 
 For $f \in \classS$,  topological equivalence  is 
the same as quasiconformal equivalence
 (e.g., Proposition 2.2 of \cite{Epstein-Rempe}), 
but in general the two notions can differ. For meromorphic functions, the order 
can be defined using the Nevanlinna characteristic, and in this setting 
Kunzi  \cite{MR0069893} constructed a meromorphic function with finite singular 
set whose order can change under a topological equivalence.
Mori's theorem (e.g., Theorem III.C in \cite{MR2241787}) implies that 
a $K$-quasiconformal equivalence can change the order by 
at most a factor of $K$.  Also, functions in $\classB$ always 
have order $\geq 1/2$ (\cite{MR1344897}, \cite{MR1242092}, \cite{MR1853780}).

The order conjecture was formulated by Adam Epstein (e.g., \cite{Epstein-2007}, 
\cite{Eremenko-Nev})
 on the basis of several  examples 
 where  $\rho(f)$ can  be computed from combinatorial 
data associated to $f$.
For example, if $f''/f'$ is polynomial of degree $d$ then $\rho(f) = d$ 
 and the order  is 
the same for any entire function topologically equivalent to $f$.
Another example comes from dynamics: if
$p$ is a polynomial  of degree $d$ with a repelling fixed point at $0$ with 
multiplier $\lambda$ then there is an entire function $f$  (called 
the Poincar{\'e} function) so that $f(0)=0, f'(0)=1$ and 
$ f(\lambda z) = p(f(z))$. This function has order $\rho(f) =  1/\log_d |\lambda|$
and the singular set of $f$ is the closure of the critical orbits of $p$. 
Thus if $p$ is post-critically finite, $f \in \classS$.
 In \cite{Epstein-Rempe},
Epstein and  Rempe prove that the order conjecture holds 
for such $f$. 
 If the post-critical set of $p$  is bounded (which is the same as 
saying its Julia set is connected), then $f \in \classB$.
By taking two quasiconformally 
conjugate polynomials with repelling fixed points at $0$, but with 
multipliers of different absolute values, they show the order conjecture 
fails in $\classB$.

We say a function $f \in \classB$ has the area property if 
$$ \iint_{f^{-1}(K) \setminus \disk }  \frac 1{|z|^2} dxdy < \infty,$$ 
whenever $K$ is compact subset of $\complex \setminus S(f)$.
The area conjecture asks if every function in $\classB$ has 
this property.
This question was first raised by  
Eremenko and Lyubich  \cite{MR1196102} in the  special case 
when $f$ has no finite asymptotic values. 
 Epstein and Rempe  prove in \cite{Epstein-Rempe}  
that the Poincar{\'e} functions associated to polynomials with Siegel disks 
do not have the area property; this gives counterexamples in $\classB$.
If $f \in {\classS}$  has the area 
property then it also satisfies the order conjecture (e.g., Theorem 1.4 of 
\cite{Epstein-Rempe}).
Thus  Theorem  \ref{Order conj} gives a counterexample 
to the area conjecture in $\classS$.
Even stronger examples are given 
in \cite{Bishop-classS}, e.g.,   a function $f \in \classS$
with $S(f) = \{ -1,0,1\}$ and  
so that $ \{ z: |f(z) |> \epsilon \}$ has finite Lebesgue area for any 
$\epsilon> 0$. 

During a visit to Stony Brook in April of 2011,  Alex Eremenko asked me 
a question about the geometry of polynomials with only two critical 
values. 
This led to  further discussions of the classes $\classB$ and $\classS$
with him and Lasse Rempe, and the current paper is among the 
consequences.  I thank both of them for 
their lucid explanations of known results  and generously sharing  
their ideas about open problems. 
I am  grateful to David Drasin for his careful reading of an 
earlier draft of this paper. His comments  prompted 
a  re-write of several sections and  improved the exposition 
and mathematics throughout the paper.
Also thanks to the referee for mathematical  corrections and 
suggestions to improve the exposition.

We use the notation $\disk$ for the unit disk, 
$\complex$ for the complex numbers, $\reals$ for the real numbers
and $\rhp$ for the right half-plane. When two quantities $x,y$ 
depend on a common parameter, $x \lesssim y$ means that $x$ is 
bounded by a multiple of $y$, independent of the parameter. 
We sometimes use the equivalent notation $x = O(y)$. If 
$x \lesssim y$ and $y \lesssim x$, we write $x \simeq y$.
We let $|E|$ denote the diameter of a set $E$.

\section{The basic idea }   \label{the idea} 

We first describe how to construct entire functions with 
exactly two critical values. At the end of the section 
we modify this idea to give a function with three critical 
values, and this is the type of function we will use to 
disprove the order conjecture.

Let $U = \complex \setminus [-1,1]$ and recall that the  map 
$$ z \to  \cosh(z) = \frac {e^z + e^{-z}}{2}, $$
acts as a covering map from $\rhp$ to $U$, 
with the half-strip 
$$ S =\{ x+iy: x> 0, |y| < \pi \} $$
being a fundamental domain.  The role of  $\cosh(z)$ in 
this paper is analogous to the role of $\exp(z)$ 
in \cite{MR1196102} 
where Eremenko and Lyubich consider functions whose
critical values are contained in a disk and $\exp(z)$ provided 
the covering map for the complement of this disk.
We will also use the conformal map of $S$ to $\rhp$ given 
by 
$$ z \to i \pi \sinh(z/2).$$
This is symmetric with respect to $\reals$ 
and fixes the boundary points $- i \pi, 0, i \pi$.

The idea of this paper is to build an entire function  starting 
from a simply 
connected  subdomain $\Omega  \subset S$.
We will assume that 
  $\Omega$ is symmetric with respect to the real line
 and  that it is obtained from $S$ by removing 
       finite trees rooted along the top and bottom sides of $S$.
       The edges of the trees will be line segments.
       See Figure \ref{Idea1}.

The vertices of $\partial \Omega$ form a locally finite set that 
includes all  vertices of the removed trees (including the points 
where the trees are attached to the sides of $S$), but may include other 
points on the tree edges or on the sides of $S$. We assume the origin 
is a vertex.  Note that $\partial \Omega$ 
is an infinite tree and hence is bipartite, i.e., we can label the 
vertices with $+1$ and $-1$ so that  no two adjacent vertices having the 
same label. 
Let $V_+, V_-$ denote the vertices labeled $+1$ and $-1$
respectively.

 Since $\cosh$ is $1$-to-$1$ on $S$, 
it is a conformal map from $\Omega$ to $\Omega'=\cosh(\Omega)$.
Let $\tau: \rhp \to \Omega$   be a conformal map that 
is also symmetric and fixes $0$. Then 
\begin{eqnarray} \label{first try}
 f(z) = \cosh(\tau^{-1} (\cosh^{-1}(z))),
\end{eqnarray}
is  holomorphic  from $\Omega'$ to $U$. The region $\Omega'$ is 
dense in the plane, so if we knew that 
$f$ extended continuously to $\partial \Omega'$,   we could 
deduce it is  entire (since the boundary of $\Omega'$ is 
a union of smooth arcs, it is removable for 
continuous, holomorphic functions, e.g.,  
\cite{MR1315551}, \cite{MR1785402}). 

\begin{figure}[htb]
\centerline{
\includegraphics[height=1.25in]{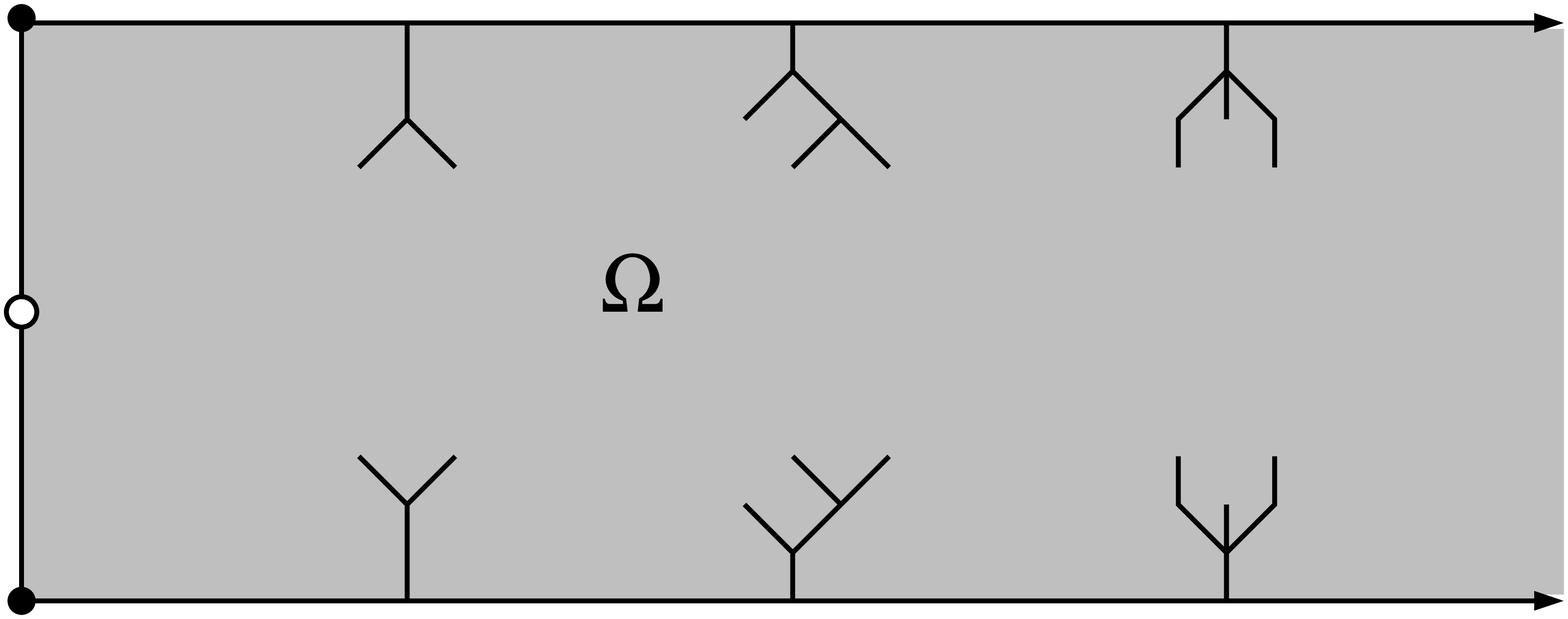}
$\hphantom{xxx}$
\includegraphics[height=1.25in]{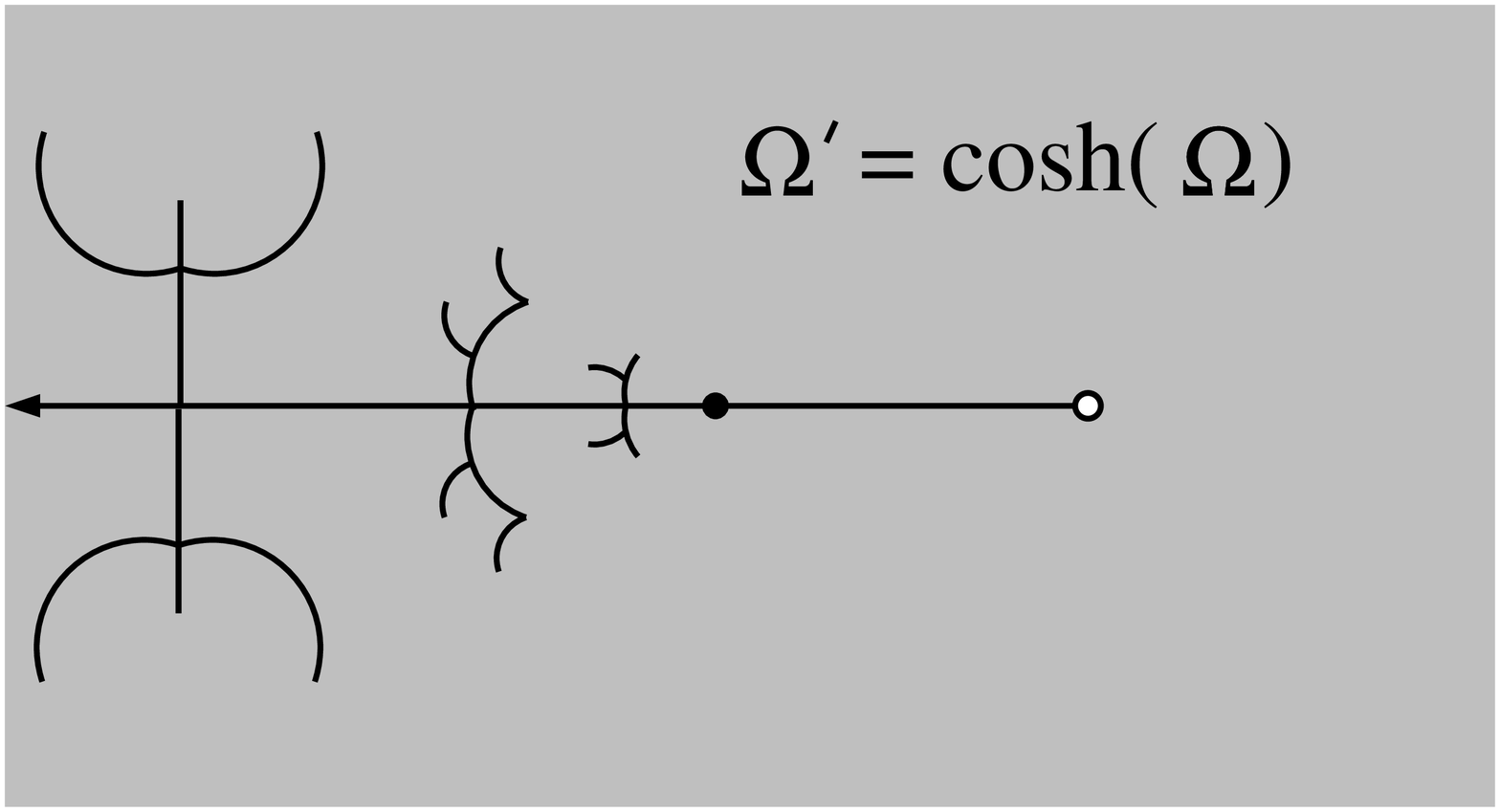}
}
\caption{ \label{Idea1}
 $\Omega$ is obtained by removing finite, straight edge trees 
from the half-strip $S$.  $\Omega' = \cosh(\Omega)$ is 
dense in the plane. Our function $f$  is holomorphic on $\Omega'$, 
but is it continuous across the boundary?
}
\end{figure}

However, $f$ is very unlikely to be continuous across 
$\partial \Omega'$. Suppose $z \in \partial \Omega$ is 
an interior point   of one of the trees we have
removed from $S$. Then  $\tau$ will map at least two different 
points of $\partial \rhp$ to $z$.
 This will cause 
a discontinuity for $f$ unless the final $\cosh$ in (\ref{first try})
maps all  $\tau$-preimages of $z$ 
 to the same point in $[-1,1]$.  In other words, for 
$f$ to extend continuously to the whole plane, we need 
\begin{eqnarray*} 
 \tau(ix) = \tau(iy)   &  \Rightarrow  & \cosh(ix) = \cosh(iy) \\
 &  \Leftrightarrow  & \cos(x)=\cos(y)  \\
 &  \Leftrightarrow  & \dist(x, 2 \pi \integers) =  \dist(y , 2 \pi \integers)
\end{eqnarray*}
 This will not happen in general, but our goal is to 
construct examples where is does happen 
by replacing the conformal map $\tau $
by a quasiconformal map $   \psi : \rhp \to \Omega$
with the essential property 
 \begin{eqnarray} \label{second try}
 \psi(ix) = \psi(iy)  \quad \Rightarrow \quad   \cos(x) = \cos(y).
\end{eqnarray}
When this holds we say  $\psi$ ``correctly identifies'' points.
In this case, the  function 
$$ g(z) = \cosh(\psi^{-1} (\cosh^{-1}(z)))$$
is quasiregular on $\Omega'$ and extends continuously across 
$\partial \Omega'$, and  hence is quasiregular on the whole plane. The 
measurable Riemann mapping theorem then implies there is a
quasiconformal map $\phi: \complex \to \complex$ so that 
$ f = g \circ \phi$
is entire.

Where are the critical values of $f$? The function $g$ is locally 
$1$-to-$1$ on $\Omega'$ and $\phi$ is $1$-to-$1$ everywhere, so
$f = g \circ \phi$ has no critical points on $\Omega'$. Thus 
all its critical points are in $\partial \Omega'$ and hence 
all critical values lie in $[-1,1] = f(\partial \Omega')$.
The only  critical values due to $\cosh$ are $\pm 1$; any others
must correspond to critical points of $\psi ^{-1}$ on $\partial \Omega$
and these can  occur only at vertices of $\partial \Omega$. Therefore
we also assume 
\begin{eqnarray} \label{white}
      \psi(ix) \in V_+  \Leftrightarrow x \in 2 \pi \integers, \qquad 
      \psi(ix) \in V_-  \Leftrightarrow x \in  \pi + 2 \pi \integers
\end{eqnarray} 
If both   and (\ref{second try}) and (\ref{white}) hold,
 then $f$ is entire and only has critical values $\pm 1$. 
Moreover, (\ref{second try}) can be reduced to a much easier 
condition to check.
Let ${\cal Z}$ be the partition of $i \reals = \partial \rhp$ into 
segments with endpoints  $\pi i \integers$. 

\begin{lemma} 
If (\ref{white}) holds and $\psi$ is 
linear on each segment  in ${\cal Z}$, then (\ref{second try}) holds.
\end{lemma}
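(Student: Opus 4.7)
The plan is a short case analysis driven by a single structural observation: under the hypotheses, each segment $I_k := [i\pi k, i\pi(k+1)] \in \mathcal{Z}$ is mapped by $\psi$ affinely onto a \emph{single} edge of the tree $\partial\Omega$, whose endpoints are the vertices $\psi(i\pi k)$ and $\psi(i\pi(k+1))$ of opposite colors. Indeed, by (\ref{white}) the $\psi$-preimage of every vertex of $\partial\Omega$ lies in $\pi\integers \cdot i$, so the affine image $\psi(I_k)$ can contain no vertex in its interior; hence it is a single edge.

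Now suppose $\psi(ix)=\psi(iy)=w$ with $ix \in I_k$ and $iy \in I_\ell$. Writing $x = \pi k + t$ and $y = \pi \ell + s$ with $t,s \in [0,\pi]$, I would split into cases:

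\textbf{Case 1: $I_k = I_\ell$.} Then $\psi$ is affine, hence injective, on $I_k$, so $x=y$ and $\cos x = \cos y$.

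\textbf{Case 2: $I_k \ne I_\ell$ and $w$ is a vertex of $\partial\Omega$.} Then $x,y \in \pi\integers$, and by (\ref{white}) the fact that $w \in V_+$ (resp.\ $V_-$) forces both $x$ and $y$ to lie in $2\pi\integers$ (resp.\ $\pi + 2\pi\integers$); in either case $\cos x = \cos y = \pm 1$.

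\textbf{Case 3: $I_k \ne I_\ell$ and $w$ lies in the interior of an edge $e$.} Then $\psi(I_k)=\psi(I_\ell)=e$, so either $\psi(i\pi k)=\psi(i\pi\ell)$ and $\psi(i\pi(k+1))=\psi(i\pi(\ell+1))$, or the pairings are reversed. Matching affine coefficients gives $s=t$ in the first subcase and $s=\pi-t$ in the second. By (\ref{white}), matching colors of the paired endpoints forces $k-\ell$ to be even in the first subcase and $k+\ell+1$ to be even in the second. In subcase one, $x - y = \pi(k-\ell) \in 2\pi\integers$, so $\cos x = \cos y$. In subcase two, $x+y = \pi(k+\ell+1) \in 2\pi\integers$, so $\cos x = \cos(2\pi m - x) = \cos x$ as well.

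The only real subtlety is the observation at the start — namely that (\ref{white}) together with linearity forbids a $\mathcal{Z}$-segment from covering two or more collinear edges — since after that, the rest is mechanical bookkeeping of orientations and parities. I would expect no difficulty beyond being careful that the parity constraint from (\ref{white}) always pairs with the correct shift-or-reflect relation between $x$ and $y$.
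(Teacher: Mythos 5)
Your proof is correct and follows essentially the same route as the paper's: the vertex case is handled identically via (\ref{white}), and your Case 3 is the paper's interior-of-an-edge case with the two orientations of the affine identification written out explicitly (the paper packages both at once by labeling the preimage segments so that $\psi(ia)=\psi(ic)=z$). Your opening observation that each segment of ${\cal Z}$ maps affinely onto a single edge is exactly the fact the paper uses implicitly when it speaks of ``the two $\psi$-preimages of $e$,'' so making it explicit is a small improvement rather than a divergence.
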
 

\begin{proof}
Suppose $v = \psi(ix) = \psi(iy)$ and $x \ne y$. If $v$ is a vertex then 
 by (\ref{white})  either  both $x$ and $y$ are in 
$2 \pi \integers$ or both are in $\pi + 2 \pi \integers$. In either case, 
(\ref{second try}) holds. If   
$v$ is in interior point of an edge $e = [ z,w] \subset \partial \Omega$, 
say $v = tz+(1-t)w$.  If the two  $\psi$-preimages of $e$ containing 
$ix$ and $iy$  are 
$[ia,ib]$ and $[ic,id]$ respectively,    with $\psi(ia)=\psi(ic) = z$, then 
$x = ta+(1-t)b$, $y=yc+(1-t)d$. Since $a$ and $c$ are either both in 
$2 \pi \integers$ or  both in $\pi +2 \pi \integers$, the distances of 
$x$ and $y$ to $2 \pi \integers$ are the same,
 which implies (\ref{second try}).
\end{proof}

Our counterexample to the order conjecture will have three critical 
values instead of two, but the basic idea is the same as above.
The only difference is that we will build the domain $\Omega$ 
 so that its boundary vertices  
can be  3-colored with the labels $\{-1,0,1\}$  and so that the map 
$$  z \to \cosh ( \psi^{-1}(z)),$$
is well defined at each vertex of $\partial \Omega$ and sends 
each vertex  to the value of its label (each edge of $\partial 
\Omega$ is mapped to segment of length $\pi/2$ on $\partial \rhp$). 
To obtain such a $\psi$, not any 3-coloring will do; there
are two conditions that must be met. First,  as 
we traverse the boundary $\partial \Omega$,
the labels must occur in the order 
$$\dots,1,0,-1,0,1,0,-1,0,1,0\dots,$$
which is the same order we encounter them when traversing the boundary 
of $U = \complex \setminus [-1,1]$.
Second, no leaf of the tree (a vertex of degree 1) can 
have label $0$. Together, these conditions are necessary 
and sufficient.
Then  as before, 
$$ g(z) = \cosh(\psi^{-1} (\cosh^{-1}(z)))$$
extends to be  quasiregular on the plane and the measurable 
Riemann  mapping theorem gives a quasiconformal $\phi$
so that   $f = g \circ \phi$
is entire with critical values $\{ -1,0,1 \}$. 
Note that the  critical points with critical 
value $0$ must correspond to vertices of $\partial \Omega$ 
with label $0$ and degree $\geq 3$.

\section{ Exponential partitions }   \label{aligning} 

We just described how our construction of an entire function $f$
depends on the construction of a domain $\Omega \subset S$ and 
 quasiconformal map $\psi: \rhp \to \Omega$ that 
correctly identifies points. The map $\psi$ will be written as 
a composition  $\psi = \psi_1 \circ \psi_0$ where $\psi_0 : \rhp \to S$
is quasiconformal and piecewise linear on the boundary, 
and $\psi_1$ will be  piecewise linear from $S$ to $\Omega$.
The map $\psi_1$ is the ``interesting'' part and contains 
the essential geometry; $\psi_0$  simply approximates the   conformal 
map from $\rhp$ to $S$. 
In this section we describe $\psi_0$.

Since $\psi_1$ will be constructed to be piecewise linear, 
there is a partition of $\partial S$ into  a collection 
of segments ${\cal I}$ so that $\psi_1$ is linear  on 
each $I \in {\cal I}$ (these will be the preimages under
$\psi_1$ of the edges of $\partial \Omega$). 
The elements of ${\cal I}$ are taken to 
be closed line segments that cover $\partial S$ and 
 are pairwise disjoint except for endpoints.
In addition, we will assume ${\cal I}$ satisfies 
\begin{enumerate}
\item ${\cal I}$ is symmetric with respect to the real line (i.e., the top and 
        bottom edges of $S$ are partitioned in exactly the same way).
\item There is a $ s > 0 $ and a constant $C < \infty$ so that 
     for all $I \in {\cal I}$,
$$   |I| \simeq \exp( -  s \dist(I, I_0)) .$$ 
\end{enumerate}
We will call  ${\cal I}$  an  exponential partition of $\partial S$ if 
both these conditions hold.

Assume the elements of ${\cal I}$ on the top edge of $S$ are denoted 
      $I_1, I_2, \dots$ in left to right order. The elements  
      on the bottom edge are similarly denoted $I_{-1}, I_{-2}, \dots$.

\begin{lemma}
If ${\cal I}$ is an exponential partition, then $|I_m| \simeq \frac 1m$ 
for all $m \geq 1$.
\end{lemma}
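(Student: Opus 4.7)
The plan is to turn the recursion implicit in the exponential-partition condition into a telescoping identity. Let $x_m$ denote the real coordinate of the left endpoint of $I_m$ on the top edge of $S$, so that $x_1 \leq x_2 \leq \cdots$ and $x_{m+1} = x_m + |I_m|$, and set $d_m = \dist(I_m, I_0)$.

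First I would check that $d_m \simeq x_m$ for all $m \geq 1$. The intervals $I_1,\dots,I_{m-1}$ tile the top-edge segment from $x_1$ to $x_m$, while $I_0$ is a fixed segment of bounded size sitting near the imaginary axis; hence the Euclidean distance from $I_m$ to $I_0$ differs from $x_m$ by an additive $O(1)$, which is absorbed into $\simeq$ after exponentiating. Combined with the hypothesis $|I| \simeq \exp(-s\,\dist(I,I_0))$, this yields
\[
|I_m| \;\simeq\; e^{-s x_m}.
\]
In particular the lengths $|I_m|$ are uniformly bounded.

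Next, set $E_m = e^{s x_m}$. Telescoping gives
\[
E_{m+1} - E_m \;=\; e^{s x_m}\bigl(e^{s|I_m|}-1\bigr).
\]
Since $|I_m|$ lies in a bounded interval, the elementary inequalities $u \leq e^{u}-1 \leq u\,e^{u}$ (for $u>0$) show $e^{s|I_m|}-1 \simeq s|I_m|$. Using $|I_m| \simeq e^{-s x_m}$, the right-hand side is $\simeq s \cdot e^{-sx_m} \cdot e^{sx_m} = s$, a constant independent of $m$. Telescoping from $m=1$ gives $E_m \simeq m$, hence $x_m \simeq (\log m)/s$, and finally
\[
|I_m| \;\simeq\; e^{-s x_m} \;=\; \frac{1}{E_m} \;\simeq\; \frac{1}{m}.
\]

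The only genuine care is in the first step, where the geometric distance to $I_0$ must be identified with the one-dimensional position $x_m$ modulo an additive constant; this is robust because $I_0$ is a single fixed segment and all the $I_m$ lie on one boundary ray of $S$. The rest is bookkeeping of the multiplicative constants hidden in the $\simeq$ relation, and of the finitely many small $m$ for which the asymptotic bound on $e^{s|I_m|}-1$ is handled trivially since there are only finitely many such terms.
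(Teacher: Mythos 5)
Your argument is correct, and it reaches the conclusion by a mechanism different from the paper's. The paper groups the partition elements into unit-length blocks along the top edge: if $N_k$ is the index of the element containing $k+i\pi$, then the number of elements meeting $[k+i\pi,\,k+1+i\pi]$ is $n_k\simeq e^{sk}$, and since a geometric sum is dominated by its last term, $N_k\simeq e^{sk}$; locating $m$ between consecutive $N_k$ then gives $|I_m|\simeq 1/m$. You instead work element by element: writing $E_m=e^{sx_m}$, the hypothesis forces every increment $E_{m+1}-E_m$ to lie between two positive constants, so $E_m\simeq m$ by telescoping and $|I_m|\simeq e^{-sx_m}=1/E_m\simeq 1/m$. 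The two proofs are essentially inverse formulations of the same fact ($x_m\simeq s^{-1}\log m$ versus $N_k\simeq e^{sk}$), but yours avoids the blocking and the geometric-series step at the cost of the small calculus estimate $e^u-1\simeq u$ on bounded intervals. You also handle the two genuinely delicate points correctly: the additive $O(1)$ discrepancy between $\dist(I_m,I_0)$ and the coordinate $x_m$ becomes a harmless multiplicative constant only after exponentiating, and the uniform bound $|I_m|=O(1)$ is exactly what makes $e^{s|I_m|}-1\simeq s|I_m|$ hold with constants independent of $m$. Either proof is acceptable; yours is arguably the cleaner of the two.
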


\begin{proof}
For $k \geq 1$, let $N_k >0$ be the index of a  partition element that contains
$k + i \pi$ and let $n_k = N_{k+1} - N_k$ (this is approximately the
number of elements that hit $[k+i \pi, k+1 + i \pi]$). By assumption, 
$ n_k \simeq \exp(  s k)$ and hence (since a geometric sum
is dominated by its last term) $N_k \simeq n_k \simeq \exp( s k)$.
So  for an integer  $N_k \leq m  \leq N_{k+1}$, 
$$ |I_m| \simeq \exp(-  s k) \simeq \frac 1{n_k} \simeq \frac 1{N_k}
 \simeq \frac 1m.$$ 
\end{proof} 

Recall that ${\cal Z}$ denotes the partition of 
 $\partial \rhp = i \reals$  with endpoints $i \pi \integers$.
For $k\geq 1$, $Z_k = [(k-1)\pi, k\pi] \in {\cal Z}$ and $Z_{-k}= -Z_k$
(there is no interval labeled $Z_0$).

\begin{lemma} \label{psi0}
Suppose ${\cal I}$ is an exponential partition. Then there is a quasiconformal 
map $\psi_0: \rhp \to S$ so that ${\cal I} = \psi_0({\cal Z})$  and 
$\psi$ is linear on each  segment in  ${\cal Z}$.
\end{lemma}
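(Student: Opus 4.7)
The plan is to prescribe $\psi_0$ on $\partial\rhp = i\reals$ in the only way compatible with the two stated requirements, and then produce a quasiconformal extension to the interior. On each $Z_k \in {\cal Z}$ I declare $\psi_0$ to be the unique orientation-preserving affine bijection onto $I_k$, matching the two partitions about $\reals$ using their common symmetry (with $\psi_0(0)$ forced onto the midpoint of the left edge of $S$). This prescription is linear on each $Z \in {\cal Z}$ and sends ${\cal Z}$ bijectively onto ${\cal I}$, so the entire content of the lemma reduces to showing that this boundary correspondence extends quasiconformally.

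For the extension I would conjugate by the explicit conformal map $\phi_0 : S \to \rhp$ from Section~\ref{the idea} and invoke the Beurling--Ahlfors extension theorem. Setting $H_0 := \phi_0 \circ \psi_0|_{\partial\rhp}$, a self-homeomorphism of $i\reals$, it suffices to show that $H_0$ is quasisymmetric in the Euclidean sense; then Beurling--Ahlfors produces a quasiconformal extension $H : \rhp \to \rhp$ and $\psi_0 := \phi_0^{-1} \circ H$ is the desired map.

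The key estimates combine the previous lemma with the explicit form of $\phi_0$. Since $|Z_k| = \pi$ is constant and $|I_k| \simeq 1/k$, the $x$-coordinate of the right endpoint of $I_k$ along the top edge of $S$ is $\sum_{j\le k}|I_j| \simeq \log k$. Since $\phi_0(x+i\pi) = i\pi\cosh(x/2)$, the image $\phi_0(I_k)$ lies at height $\simeq \sqrt k$ on the positive imaginary axis with $|\phi_0(I_k)| \simeq 1/\sqrt k$. Consequently $H_0$ has slope $\simeq 1/\sqrt k$ on $Z_k$, adjacent slopes have ratio $\sqrt{k/(k+1)} \to 1$, and globally $H_0$ is asymptotic to the standard quasisymmetric self-map $y \mapsto \sgn(y)\sqrt{\pi|y|}$ of $\reals$.

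The main technical obstacle will be verifying the three-point condition of Beurling--Ahlfors \emph{uniformly} across all scales, not merely asymptotically. On first inspection the raw boundary correspondence $y \mapsto x(y) \simeq \log|y|$ suggests a \emph{non}-quasisymmetric map; what saves the argument is that the conformal distortion of $\phi_0$ along the top edge of $S$ grows like $e^{x/2}$, exactly compensating for the logarithmic compression so that the composite $H_0$ becomes square-root-like and hence quasisymmetric. Without the exponential spacing hypothesis on $\cal I$ the conclusion would fail. Once this observation is in place, the three-point check reduces to a routine manipulation with the two estimates $|I_k| \simeq 1/k$ and $\phi_0(x+i\pi) = i\pi\cosh(x/2)$.
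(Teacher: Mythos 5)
Your proof takes a genuinely different route from the paper's. The paper never analyzes the boundary correspondence at all: it decomposes $\rhp$ into the half-annuli $W_k=\rhp\cap\{\pi k\le |z|\le \pi(k+1)\}$ (plus one base piece $W_0$), decomposes $S$ into the rectangles $S_k$ lying over the intervals of ${\cal I}$, and maps $W_k\to S_k$ piece by piece, linearly on the prescribed boundary arcs. The uniform quasiconstant comes from the observation that $W_k$ and $S_k$ are quadrilaterals of comparable modulus ($\simeq k$ for both, using $|I_{k+1}|\simeq 1/(k+1)$ from the preceding lemma), so no extension theorem is invoked. Your route --- prescribe the boundary homeomorphism, transport it by the conformal map $S\to\rhp$, verify quasisymmetry, apply Beurling--Ahlfors --- is equally legitimate and arguably more conceptual: it isolates exactly why the exponential decay hypothesis is the right one (the derivative $\simeq e^{x/2}$ of the conformal map along the top edge cancels the logarithmic compression $x_k\simeq \log k$, turning an apparently non-quasisymmetric correspondence into a power map). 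The paper's construction buys explicitness and sidesteps all boundary-derivative computations; yours buys a reduction to a single analytic criterion.

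Two caveats. First, your exponent is correct only when $s=1$: if $|I|\simeq e^{-s\,\dist(I,I_0)}$, then $x_k=\tfrac1s\log k+O(1)$ and $H_0(y)\simeq |y|^{1/(2s)}$, not $|y|^{1/2}$ (in the main construction of Section 5 one actually has $s=\log 4$). This is harmless, since the odd extension of $y\mapsto |y|^{\alpha}$ is quasisymmetric for every $\alpha>0$, but the argument should be stated for general $s$. Second, the three-point check you defer is standard but not free: beyond the global power-law regime, where $H_0'\simeq |y|^{1/(2s)-1}$ integrates against a doubling weight, you must handle the finitely many exceptional scales --- the corner $i\pi$ where the left and top edges of $S$ meet (there the conformal map behaves like $w\mapsto w^2$, so $H_0'$ vanishes linearly from both sides), and the region near the origin where $H_0'$ is bounded above and below while the power-law model's derivative is not, so the pointwise comparison $H_0'\simeq g'$ fails there and the triples with $|t|\lesssim 1$ or straddling the exceptional points need a separate (easy) case analysis. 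None of this is an obstruction; it is simply where ``routine'' hides the page of checking that the paper's modulus-comparison argument avoids.
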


\begin{proof}
Partition $\rhp$ as follows. Let $W_0$ be the region bounded by the vertical 
segment $[-2 \pi i, 2 \pi i]$ and an arc of the circle $|z|= 2 \pi$. 
For $k =1, 2 , \dots$, let
 $$W_k = \rhp \cap \{z:  \pi k \leq |z| \leq   \pi (k+1) \},$$ 
(this is a half-annulus).
Partition $S$ into rectangles by letting $S_k$, $k=0,1,\dots$ 
 be the points in $S$ that project 
vertically onto $I_{k+1} \in {\cal I}$.
Then it is easy to check  that we can map $W_0$ to $S_0$ by  a map that
is linear on each interval 
$Z_{-2}, Z_{-1}, Z_1, Z_2 \subset \partial W_0$ and that 
is linear from arclength on the circular side of $W_0$ to the right-hand 
side of $S_0$. See Figure \ref{FirstMap}.

Each  half-annulus  $W_k$, $k \geq 1$ can be quasiconformally 
mapped to $S_k$ by a map that is linear on each component of $\partial W_k \cap
\partial \rhp$ and is linear in the argument on the two circular sides. 
It is important to note that the quasiconstant can be chosen independent 
of $k$, but this is easy to see since both $S_k$ and $W_k$ are generalized 
quadrilaterals with modulus approximately $k$.
\end{proof}

\begin{figure}[htb]
\centerline{
\includegraphics[height=1.5in]{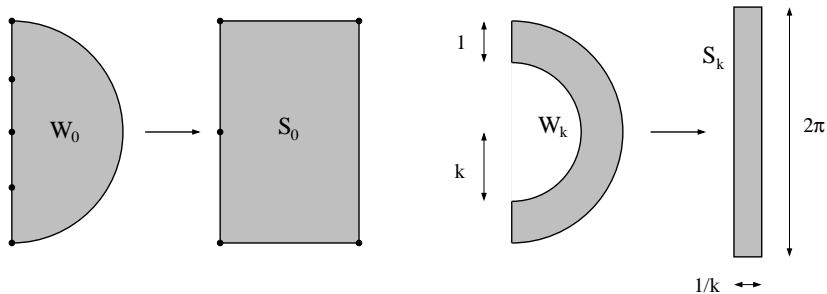}
}
\caption{ \label{FirstMap}
Define $\psi_0$  to be linear on boundary edges and uniformly quasiconformal.
}
\end{figure}

\section{A simple example}   \label{example} 
  
The  main  step in our construction is building 
the map $\psi_1: S \to \Omega$. 
This map will be built using  piecewise affine maps 
between combinatorially equivalent  infinite triangulations 
of $S$ and $\Omega$. Although the triangulations 
are infinite, only a finite number of different 
shapes are used (up to Euclidean similarity), and so 
the quasiconstant is bounded by the maximum over a finite 
set of affine maps.
In this section 
we will build such a map $\psi_1$ in a simple case, in order to 
introduce the basic idea before attacking the more 
complicated construction in the next section.

Triangulations of  two polygonal domains $\Omega_1, \Omega_2$ 
are compatible   if 
we have   a $1$-to-$1$ correspondence between  the  triangulations 
that  preserves interior  adjacencies  (i.e., if two triangles 
share in edge in $\Omega_1$ then the corresponding triangles share
an edge   in $\Omega_2$).
  See Figure \ref{compatible}.  We can then define a piecewise 
affine map from $\Omega_1$ to $\Omega_2$ by using the unique 
affine map between corresponding triangles that respects adjacency.
If triangles  share an edge on 
the boundary of $\Omega_2$ the corresponding triangles in 
$\Omega_1$ don't have to be adjacent. This will happen when   $\Omega_1$ 
has a Jordan boundary, but   $\Omega_2$ has slits.

\begin{figure}[htb]
\centerline{
\includegraphics[height=1.5in]{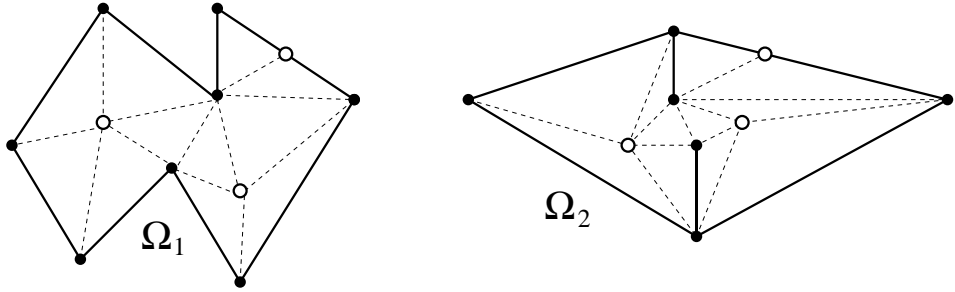}
}
\caption{ \label{compatible}
Compatible triangulations of two domains. Solid lines 
indicate the domain boundaries and dashed lines are  the 
interior edges of the triangulations.  
We allow extra vertices either 
in the interior or on the boundary.   Also note
that adjacency only need be preserved when triangles
share an edge inside the domain.
}
\end{figure}

\begin{figure}[htb]
\centerline{
\includegraphics[height=4.0in]{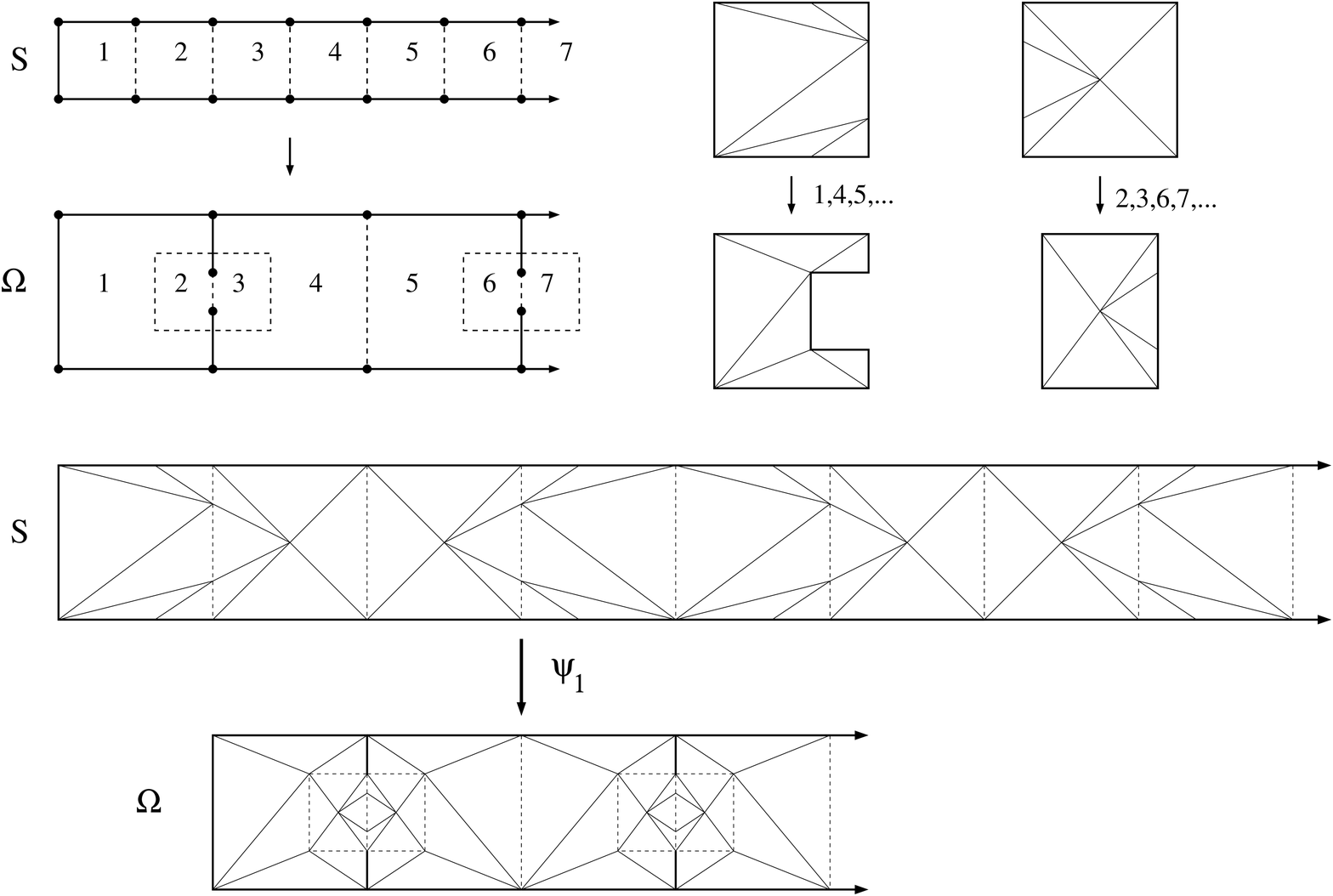}
}
\caption{ \label{FirstExample}
Constructing a piecewise affine map  $\psi_1:S \to  \Omega$   using 
compatible triangulations. On the upper left we show 
partitions of $S$ and $\Omega$ into corresponding subdomains and 
on the right we show that corresponding subdomains have
compatible triangulations. On the bottom we 
insert the triangulations into the pieces to show the 
compatible triangulations of $S$ and $\Omega$.
The same pattern is repeated forever.
}
\end{figure}
 
Now for the example.
 Consider Figure \ref{FirstExample}. It  shows the 
half-strip $S$ and a subdomain $\Omega \subset S$ formed by 
removing certain vertical slits from $S$.
 Both $S$ and $\Omega$ are 
partitioned into pieces that are numbered $1,2,3, \dots$ moving left to right.
 The pieces of $S$ are all squares.  The pieces of $\Omega$
have two shapes: rectangles and pieces that look like a letter ``C'' or 
its reflection.
Figure \ref{FirstExample} shows a triangulation for each piece 
and we can easily check that corresponding pieces for $S$ and 
$\Omega$ have compatible triangulations.
Using these building blocks we can find 
compatible triangulations of all of $S$ and $\Omega$ and hence a 
piecewise linear map from $S$ to $\Omega$. Since we are repeatedly 
using just two building blocks, the map is quasiconformal, with 
constant determined by the most distorted triangle from a 
finite set of possibilities. Note that the particular choice of 
triangulation is not important, and we have made no attempt to 
optimize the number of triangles used or the resulting QC constant.

The map $\psi_1$ is linear on a partition ${\cal J}$ 
 of $\partial S$ consisting of 
equal length segments.   We define an exponential partition 
by dividing the segments on $\partial \Omega$ into sub-segments;
about $2^n$ for segments that are between distance $n$ and $n+1$
from the left side of $\Omega$. Pulling back  these edges  
 on $\partial \Omega$  via $\psi_1$
gives an exponential partition of $\partial S$ and then we 
apply the construction of the Section \ref{aligning} to build $\psi_0$. 
If $ \psi = \psi_1 \circ \psi_0 : \rhp \to \Omega$ then 
$$ g= \cosh\circ \, \psi^{-1} \circ \cosh^{-1} $$
is defined on $\Omega' = \cosh(\Omega)$ and 
 extends to a quasiregular function on the plane. 
Thus  there is  a   $\phi$  so that
$ f = g \circ \phi$ is entire with two critical values.

Also note that $f$  has  positive, finite order. It has 
order $\geq 1/2$ simply because all functions in 
class $\classB$ do. To show $f$ has finite  order it 
suffices to show $g$ does; the quasiconformal correction 
can only change the order by a multiple depending on 
the quasiconformal constant of  the correction 
map $\phi$ (i.e., $\phi$ is H{\"o}lder). The main 
property of $\Omega$ that we need is that it contains 
a half-strip $\{ x+iy: x> 0, |y| < w\}$ for some fixed 
$w >0$. 

Suppose $x \geq 2 \pi$ and let $D=D(x,r)$ be the maximal
disk centered at $x$ contained in $\Omega$. Since $\Omega$
is contained in the strip $S$, we have $r \leq \pi$ and 
$\partial D$ hits $\partial \Omega$ at symmetric points 
$ t\pm i y$ with $|x-t| \leq \pi$  and $ y \geq w$. Let
$S_x$ be the hyperbolic geodesic in $D$ connecting these 
two points. This arc cuts $D$ into two subdomains that are
quasicircles with uniformly bounded constant, and hence 
a result of Fern{\'a}ndez, Heinonen and Martio
\cite{MR981499}  implies the image of each of
these domains is a quasicircle with uniformly bounded 
constant in $\rhp$. This easily implies that 
$\gamma_x = \psi^{-1}(S_x)$  is a curve in $\rhp $ that   is  
symmetric with respect to the real line and 
satisfies 
 $R_x \simeq r_x \simeq \psi^{-1}(x)$ where 
   $$  R_x = \max_{z \in \gamma_x} | z |, \qquad
  r_x = \min_{z \in \gamma_x} | z| .$$
(In other words, $\gamma$ is contained in an annulus around 
the origin of fixed modulus, independent of $x$.)
See Figure \ref{FHMlemma}.

\begin{figure}[htb]
\centerline{
\includegraphics[height=2.0in]{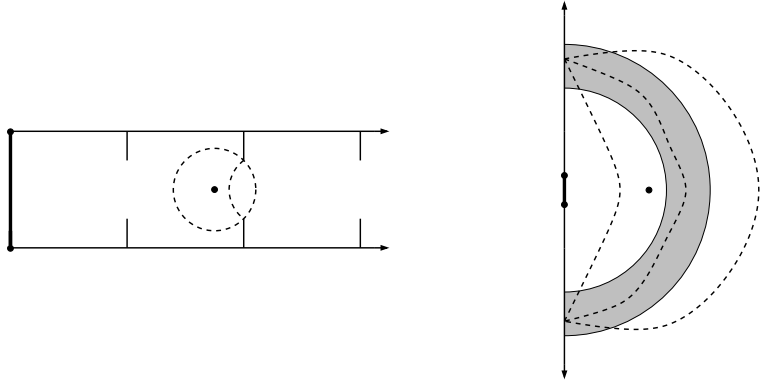}
}
\caption{ \label{FHMlemma}
For each $x$ we can find a nearby cross-cut 
of $\Omega$ that maps to an approximate
half-circle in $\rhp$, centered at the origin, and 
an extremal length argument shows 
the logarithm of the diameter grows
linearly with $x$ if $\Omega$ contains an 
infinite strip. This implies  $f$ has finite 
order. 
}
\end{figure}

If $I$ is the left side of $\Omega$ then 
 the extremal distance from $ \psi^{-1}(I)$ to $\gamma_x$
in $\rhp$ 
is comparable to the extremal distance from $I$ to $S_x$ 
in $\Omega$
(since modulus is quasi-preserved by $\psi$).
The first  is easily seen to be comparable to 
$\log R_x$ and the second is comparable to $x$, since 
$\Omega$ is contained in the half-strip $S$  and contains another 
half-strip  of positive width. 
Finally, 
   $$  \rho(g) \leq \limsup_{ x \to \infty}
   \frac {\max_{u+iv \in \Omega, u <x} \log |\psi^{-1}(u+iv)|}{x}
 \leq \limsup_{ x \to \infty} \frac {\log R_{x+\pi}}{x},$$
and our previous remarks show the rightmost term  is bounded as $x \to 
\infty$.

More generally, this argument shows that $g$ (and hence $f$)
will have finite order whenever the domain $\Omega$ contains 
an infinite half-strip of positive width.

\section{The  main construction }   \label{main}

The domain $\Omega$ used in the proof of Theorem \ref{Order conj} 
  is illustrated in Figure \ref{Domain}. It 
consists of the half-strip $S$ with finite trees removed along the 
integer points of the top and bottom edge. The domain is 
symmetric with respect to the real axis and in most of our 
pictures we will only show the lower half to simplify the 
illustrations.  Note that there is a  half-strip 
contained in $\Omega$; this implies that the 
function we eventually obtain will have positive, finite order 
by our previous comments.

\begin{figure}[htb]
\centerline{
\includegraphics[height=2.0in]{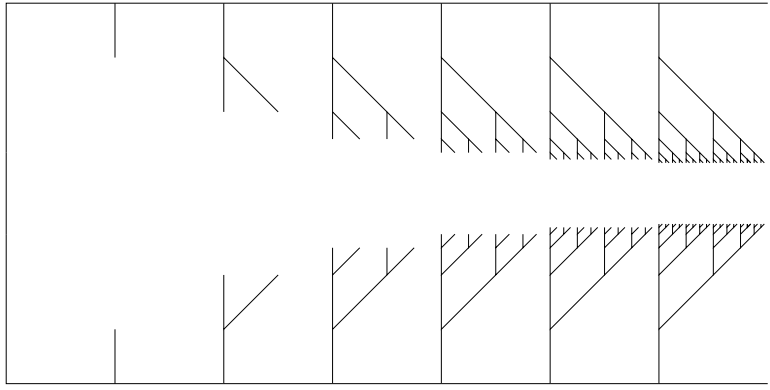}
}
\caption{ \label{Domain}
The domain in $\cosh$ coordinates. The domain is symmetric with 
respect to the real axis, and the trees are attached at the 
integer points. The line segments are further divided into 
edges by vertices of degree $2$; see Figure \ref{FourColors}.
}
\end{figure}

Every removed tree contains a vertical unit line segment attached 
to $\partial S$. For $n=1$ this segment is the entire tree.
For $n=2$ we add two segments: another vertical line segment 
of length $1$ and a diagonal segment of slope $1$ and
 length $\sqrt{2}$ (so the degree one 
vertices of the resulting tree are both distance $2$ from $\partial S$).
In general, the tree $T_n$  attached at the $n$th point is the vertical 
segment plus a binary tree of depth $n-1$ as shown in Figure \ref{Domain}.

The edges of  $T_n$ are naturally divided into levels from $0$
(attached to $\partial S$) to $n-1$ (adjacent to the leaves of the tree).
We form a new tree $T_n'$ by subdividing  edges in
 the $k$th level of $T_n$  into $4^k$ equal sub-edges. 
See Figure \ref{FourColors}. Note that  edges in the 
$k$-th level have length $ \simeq 2^{-k} \cdot 4^{-k} = 8^{-k}$.
 We define the vertices of $\partial \Omega$
to correspond to the vertices of $T_n'$. It is these new, shorter 
edges that will eventually be mapped to segments of length 
$\pi/2$ on $\partial \rhp$. We also subdivide the edges of 
$\partial \Omega$ on $\partial S$ by adding vertices where 
the real parts equal $n +\frac 12$, $n=1,2,3\dots$ along
the top and bottom edges of $T$ and adding the points $ \pm i \pi /2$
to the left side of $S$. 

 This subdivision of $T_n$ to obtain $T_n'$  is important for two
reasons. First, when we define the  map $\psi_1: S \to \Omega$, these
new edges will pull back to an exponential partition of $\partial S$, 
and this allows us to define $\psi_0$ as before. Second, since
every edge of $T_n$  of level $k$ is divided into $4^k$ sub-edges, 
we can  label the vertices of $ T_n'$ so that the vertices of $T_n$ 
all get label $ 1$, except for the root of $T_n$ (the
point where it is attached to $\partial S$) that gets 
label  $0$.
Moreover, the roots of the trees are  the only 
vertices labeled zero that are not degree 2 vertices. Hence 
these are the only preimages of $0$ that are critical points.
This fact 
 will be important when we construct 
a quasiconformal deformation that changes the order of our function.

\begin{figure}[htb]
\centerline{
\includegraphics[height=1.8in]{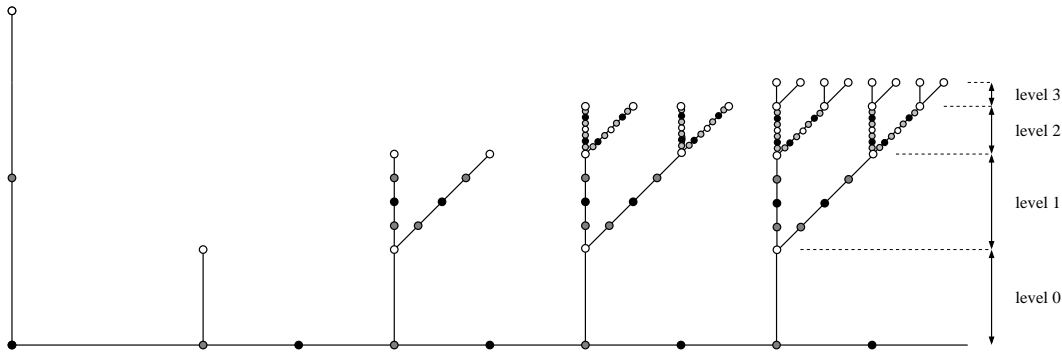}
}
\caption{ \label{FourColors}
$T_n'$ is  $T_n$ with extra vertices added.
After subdividing, the new vertices of    $\partial \Omega$ 
 can be three-colored as shown (black = -1, gray = 0, 
white = 1).
The labels are only shown for levels $0,1,2$ since edge lengths 
decrease exponentially with the level.
}
\end{figure}

Next we have to describe the map $\psi_1 : S \to \Omega$. Consider the 
dashed curve $\gamma$ in Figure \ref{Gamma2}. It is horizontal  along the 
top of each tree and has slope $1/2$ between trees (because 
the horizontal distance between $T_n$ and $T_{n+1}$ is 
$2^{-n+1}$ and $T_{n+1}$ is $2^{-n}$ ``taller'' than $T_n$.
 The curve $\gamma$
and its reflection across the real line bound a region $\Omega_1 
\subset \Omega$ (half of $\Omega_1$ is the shaded region in Figure 
\ref{Gamma2}).  It is easy to map $S$ to $\Omega_1$ by a piecewise 
linear quasiconformal map that sends $S\cap L$ affinely to 
$\Omega_1 \cap L$ for every vertical line $L$.

\begin{figure}[htb]
\centerline{
\includegraphics[height=1.5in]{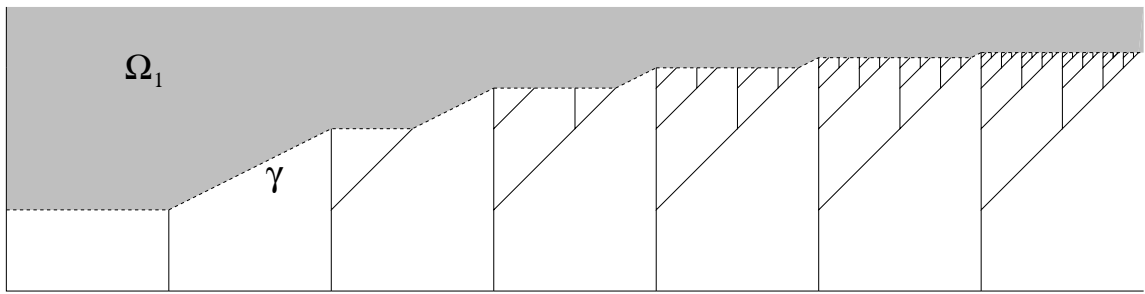}
}
\caption{ \label{Gamma2}
The first step is to map the half-strip $S$ to the variable width 
strip $\Omega_1$ by a piecewise linear quasiconformal map. This is 
easy.
}
\end{figure}

The leaves  of $\partial \Omega$ lie on $\partial \Omega_1$ and 
divide it into segments. We associate each such segment $I$ to a 
region  $Q_I \subset \Omega_1$.
See Figure \ref{Gamma3}.
 If $I$ is  horizontal, then $Q_I$ is the 
square in $\Omega_1$ with $I$ as one side. If $I$ has slope $1/2$
then $Q_I$ is the right triangle in $\Omega_1$ with hypotenuse $I$ and 
vertical and horizontal legs. 
We also associate to $I $ the component $U_I$ of $ \Omega \setminus \Omega_1$
that has $I$ on its boundary. 

\begin{figure}[htb]
\centerline{
\includegraphics[height=1.5in]{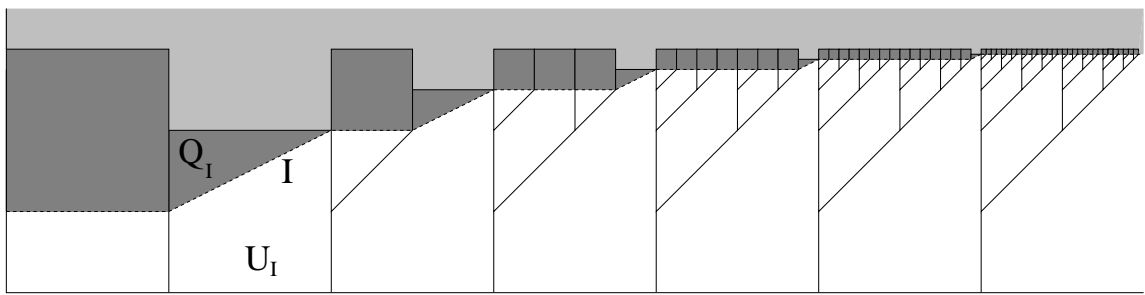}
}
\caption{ \label{Gamma3}
The leaves of $\partial \Omega$ partition $\partial \Omega_1$ and we associate 
to each segment  $I$ a region $Q_I$   above it  and  a 
region $U_I$ below it.
}
\end{figure}
\begin{figure}[htb]
\centerline{
\includegraphics[height=1.5in]{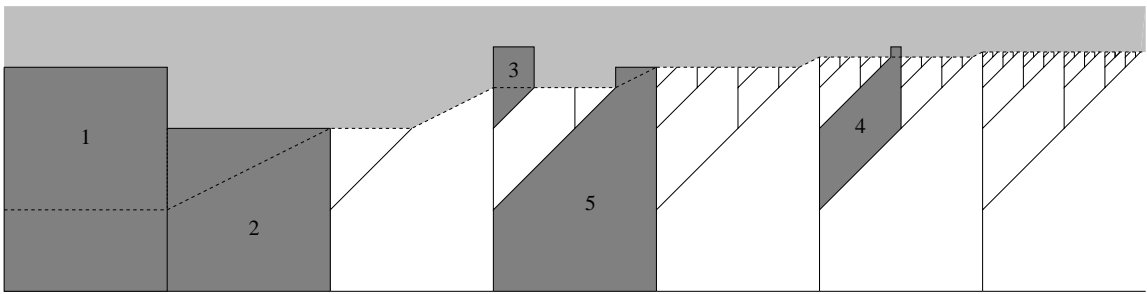}
}
\caption{ \label{Gamma2B}
The regions $Q_I$ are mapped to regions $W_i$ that ``fill in'' 
$\Omega \setminus \Omega_1$. There are two special regions that 
are used just once each and three that are used repeatedly.
The five types of regions are illustrated. Regions 1 and 2 are 
special cases that occur only once each and cases 3, 4 and 5 are 
called triangular, full and partial tubes respectively.
There is only one type of triangular region, but the 
full and partial tubes come in infinitely many 
versions. However each of these versions is built from 
a finite number shapes.
}
\end{figure}

  We let $W_I = Q_I \cup U_I \cup I$
 be the union of the regions above and below $I$.  
We will define a map $\Omega_1 \to \Omega$ by mapping each region $Q_I$ to 
$ W_I$. Our map will be the identity on $\partial Q_I \cap \Omega_1$, 
and hence extends continuously to all of $\Omega_1$ by setting it to the 
identity outside all the $Q_I$'s. This map is called a ``filling'' map for 
$W_I$.

There are five cases to consider: two special cases that occur once each, and 
three cases that occur infinitely often. The two special cases are the 
first two segments on $\gamma$; the ones that project vertically to $[0,1]$
and $[1,2]$. Figures \ref{Map1} and \ref{Map2}  show how to define these maps 
using triangulations.

\begin{figure}[htb]
\centerline{
\includegraphics[height=1.5in]{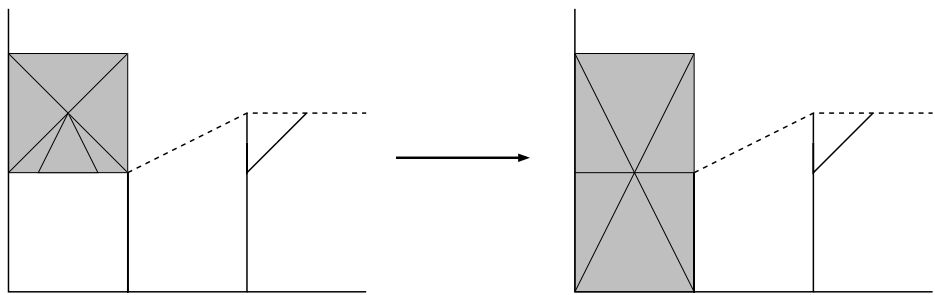}
}
\caption{ \label{Map1}
The first special case. 
}
\end{figure}

\begin{figure}[htb]
\centerline{
\includegraphics[height=1.5in]{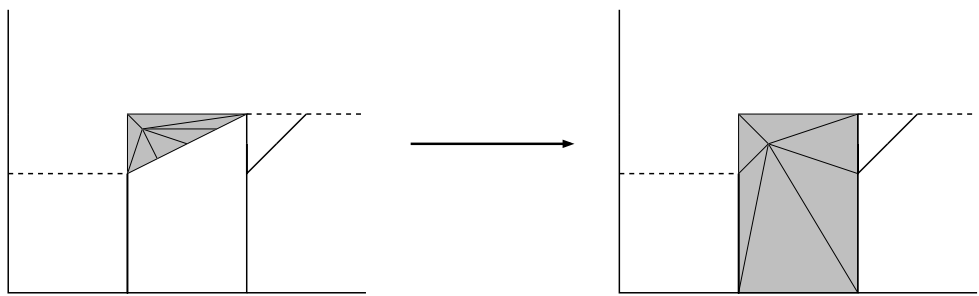}
}
\caption{ \label{Map2}
The second special case.
}
\end{figure}

The remaining three cases are illustrated 
 in Figures \ref{Map3}-\ref{Map8}. We refer 
to these cases as triangles ($U_I$ is a triangle), 
partial tubes ($U_I$ is not a triangle and 
 does not touch $\partial S$) and 
full tubes ($U_I$ touches $\partial S$).  
See cases 3, 4 and 5 in Figure \ref{Gamma2B}.

\begin{figure}[htb]
\centerline{
\includegraphics[height=2.0in]{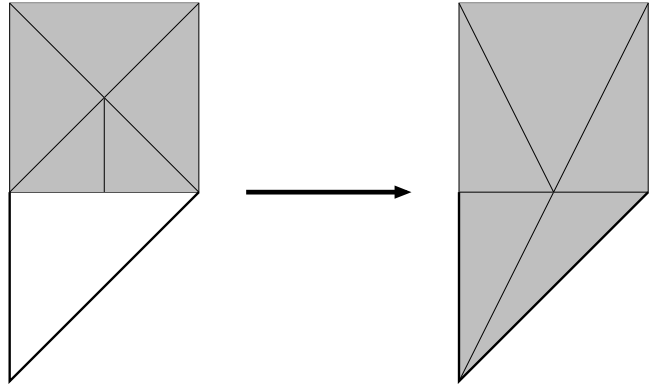}
}
\caption{ \label{Map3}
This shows the filling map for a triangular component. 
}
\end{figure}

The filling map for full tubes is illustrated in Figures
\ref{Map5A} and \ref{Map5B}. In Figure \ref{Map5A}, 
the left side shows the region $Q_I$ divided into 
a number of subregions; a pentagon on top, a series 
of non-convex hexagons and a final triangle.  The exact 
sizes of these regions is important and will be discussed
in the next paragraph; for the moment, just assume that 
each of the middle regions (the non-convex hexagons) 
is similar to all the others. The regions are labeled 
with numbers indicating their levels; these are the 
same levels we used to subdivide the edges of the tree 
$T_n$ to obtain the tree $T_n'$.

 The 
right side of Figure  \ref{Map5A} shows the region $W_I$  divided
into  subregions corresponding to the subregions of $Q_I$: a rectangle on top 
(this contains $Q_I$), a series of trapezoids, and a 
square on the bottom. The regions on left and right are 
numbered and each subregion on the left is mapped to the 
corresponding  subregion on the right  by a piecewise 
affine map. The triangulations that define this map 
are illustrated in Figure \ref{Map5B}.
Because we are only using three  basic shapes
(even though the middle shape may be used many times), 
each with a finite triangulation, the maps we build 
are clearly uniformly quasiconformal. 

\begin{figure}[htb]
\centerline{
\includegraphics[height=2.5in]{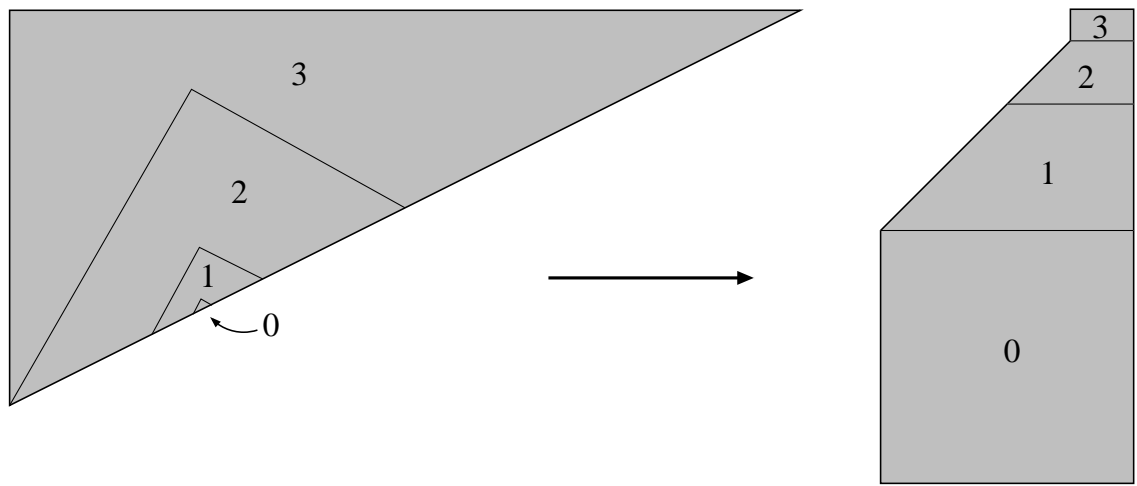}
}
\caption{ \label{Map5A}
This shows the filling maps for the full tubes. We subdivide $Q_I$ and 
$W_I$ as shown and map the pieces as illustrated in 
Figure \ref{Map5B}.
All the filling maps for full tubes are of this form, differing only in the 
number of times the central trapezoid piece is repeated.
The only critical feature is that components pieces in $Q_I$ have diameters 
that decay like powers of $4$ (this gives an exponential 
partition of $\partial S$).
}
\end{figure}

\begin{figure}[htb]
\centerline{
\includegraphics[height=3.0in]{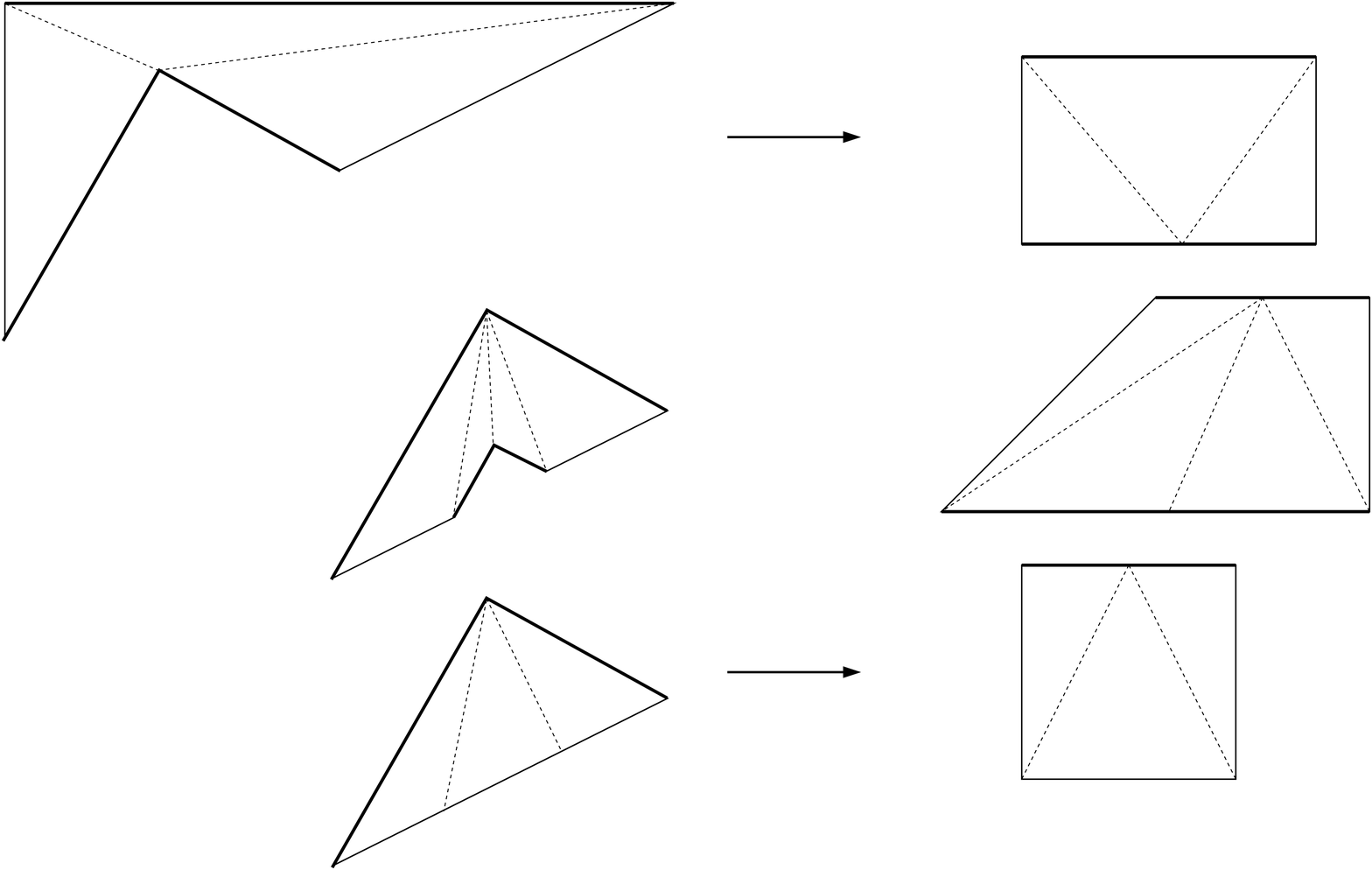}
}
\caption{ \label{Map5B}
This shows the triangulations defining the  filling maps for the full tubes. 
The reader should check that they are compatible.
}
\end{figure}

There is another property we need to check. The partition 
of the tree into edges is supposed to pull back to 
an exponential partition of the strip. This means that 
the vertices along the boundary of a full tube should pull 
back to a points on $I$ that are approximately evenly 
spaced and grow exponentially with the distance of 
$I$ from the left side of $\Omega$.  Consider Figure 
\ref{Map5A}. The full tube is pictured on the right and 
divided into levels (starting with the square on the 
bottom, which is level 0). The sides of level $k$ are 
divided into $4^k$ equal length edges of the 
tree (see Figure\ref{FourColors}).
These points pull back to $4^k$ equally  spaced points 
on  a subsegment of $I$ (on the left of Figure \ref{Map5A}, 
the segment is where the boundary of a subregion hits $I$).
In order for the collection of all preimage points to 
be equally spaced on $I$, we need each segment corresponding 
to a level $k$ region to  be 
$4$ times longer than the  segments corresponding to 
a level $k-1$ region, but this is easy to accomplish.
This implies that when the edges of $\partial \Omega$ are pulled 
back, every preimage in $[n,n+1]$  has length $\simeq 4^{-n}$  and 
hence they form an exponential partition.

The corresponding pictures for partial tubes are a little 
simpler. Figure \ref{Maps7} shows how we cut $Q_i$ and $W_I$ 
into corresponding pieces and Figure \ref{Map8} shows how 
each piece is mapped via a triangulation. The rest of the 
argument is the same as for the full tubes (checking the map 
is uniformly quasiconformal and defines an exponential partition 
of the strip).

This completes our construction of the quasiregular map 
$g=\cosh \circ \, \psi^{-1} \circ \cosh^{-1}$ and hence of 
an entire function $ f = g \circ \phi$ with three singular 
values. The next section 
will prove $f$ is a counterexample to the order conjecture.

\begin{figure}[htb]
\centerline{
\includegraphics[height=2.0in]{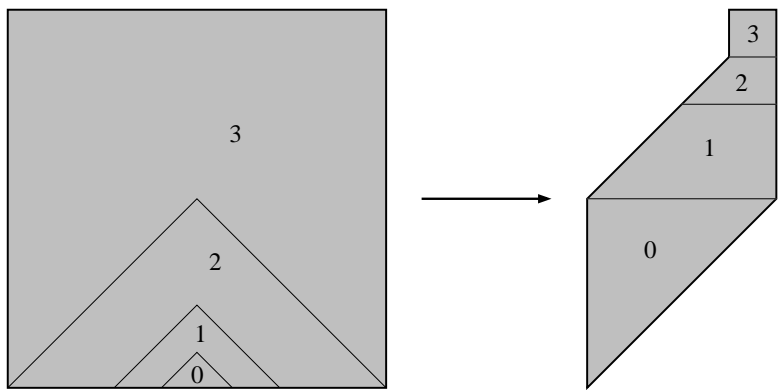}
}
\caption{ \label{Maps7}
This shows how to decompose $Q_I$ and $W_I$ into corresponding 
pieces to construct the filling maps for partial tubes. 
}
\end{figure}

\begin{figure}[htb]
\centerline{
\includegraphics[height=3.0in]{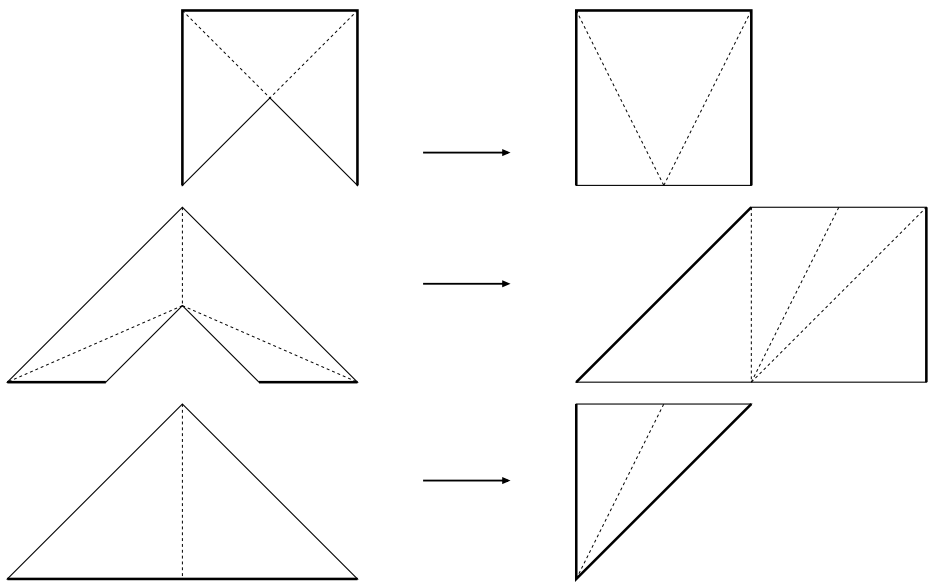}
}
\caption{ \label{Map8}
This shows how to map corresponding pieces of $Q_I$ and $W_I$ for 
partial tubes. 
}
\end{figure}

\clearpage

\section{The order conjecture fails }   \label{order conjecture} 

Suppose $f$ is the entire function constructed in the previous 
section. 
We claim   we can choose  quasiconformal maps $\tau, \sigma$ 
and an entire function $F$ so that 
$$  F \circ \sigma = \tau \circ f, $$
where $\tau$ is the identity off a  compact set and $\sigma$ satisfies 
\begin{eqnarray} \label{sqrt root}
 |\sigma(z)|  = O(\sqrt{|z|}),
\end{eqnarray}
for all sufficiently large $z$. 

First we check that  (\ref{sqrt root}) gives the desired
counterexample.  Taking $z = \sigma (w)$,  
\begin{eqnarray*}
 \lim_{  |z| \to \infty}  \frac {\log_+ \log_+ |F(z)|}{ \log_+ |z|}
  &=& 
 \limsup_{|w| \to \infty} \frac {\log_+ \log_+ |F(\sigma(w) )|}
           { \log_+ |\sigma(w)| } \\
  & \geq & 
 \limsup_{|w| \to \infty} \frac {\log_+ \log_+ | \tau(f(w) )|}
           { \log_+ | \sqrt{w}| } \\
  & \geq & 
 2 \limsup_{|w| \to \infty} \frac {\log_+ \log_+ | f(w )|}
           { \log_+ | w| }, \\
\end{eqnarray*} 
since $f = \tau \circ f$ when $|f|$ is large enough.
Thus the order of $F$ is at least twice the order of $f$. 
Since the order of $f$ is positive and finite, the two 
orders are different.

Now we prove the claims.   First we define $\tau$.  Let 
$W =  D(1,2)  \setminus  [0,1]$. 
$W$ is a topological annulus and 
is conformally equivalent to the round annulus    $A_t = 
\disk \setminus t \disk$  for some $t$.
For $K > 1$, the map 
$$  r_K: z \to z|z|^{K-1},$$
is a $K$-quasiconformal from $A_t$ to  $A_{t^K}$  that is  the identity on
 the outer boundary of $A_t$ and 
decreases the extremal length of the path family that 
separates the two boundary components of $A$ by a factor 
of  $K$.
Let $W_K$ be the domain of the form $D(1,2) \setminus [s,1]$ that 
is conformally equivalent to $A^{t^K}$.  Thus $r_K$ transfers to a 
$K$-quasiconformal map $\tau:W\to W_K$ that is the identity on the 
outer boundary of $W$
decreases the extremal length of the path family that 
separates the two boundary components of $A$ by a factor 
of  $K$.
 It is easy to check that $\tau$ extends 
continuously across the slit boundary of $W$ and can be extended 
by the identity   outside $\overline{W}$ to give a $K$-quasiconformal
map of the whole plane. 
 This is the map $\tau$ we use in 
our quasiconformal equivalence. By the measurable Riemann mapping 
theorem, there is a $K$-quasiconformal map $\sigma$ of the plane 
so that $F = \tau \circ f \circ \sigma$ is entire. 
We can assume $\sigma$ fixes $-1,1,\infty$.

All that remains is to show  that (\ref{sqrt root}) holds.
Let $V = D(1,2)\setminus [-1,1]$ and consider inverse images
of $V$ under $f$.  By construction, every such preimage contains 
two preimages of $0$ on its boundary and either 
\newline $ \hphantom{xxxx}$ (I)  both preimages are critical points,
\newline $ \hphantom{xxxx}$ (II) exactly one preimage is a critical point, or
\newline $ \hphantom{xxxx}$ (III) neither preimage is a critical point.
\newline 
Case I occurs when the preimages of zero are the roots 
of two adjacent trees. Case II only occurs twice and corresponds 
to the corners of $S$ in $\cosh$ coordinates. Case III 
occurs for preimages that do not touch $\partial S$ in $\cosh$
coordinates.
A cartoon of the different types of preimages is shown (in 
$\cosh$ coordinates) in Figure \ref{Dilatation}. 

\begin{figure}[htb]
\centerline{
\includegraphics[height=1.8in]{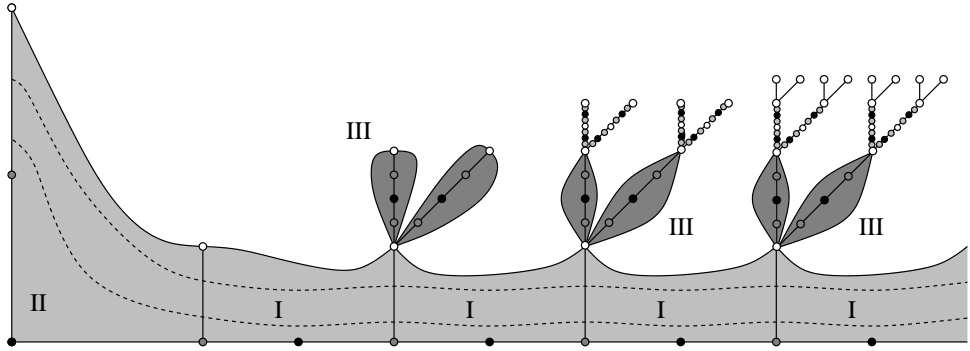}
}
\caption{ \label{Dilatation}
The support of our dilation lifted to $\Omega$ (its easier to 
see in $\Omega$ than in $\Omega'=\cosh(\Omega)$). The light gray
is the union of type I  and type II  preimages  and the darker gray shows a 
few  type III preimages (this is a sketch, not a computation).  The dashed
lines represent the  path family  $\Gamma_n$ 
whose extremal length is  decreased
by a factor of $K$ by our dilatation.
The grey dots correspond to preimages of $0$.
}
\end{figure}

Let $\gamma_0= [- i\pi /2, i \pi /2]$ and for $n=1,2,\dots$ let 
$\gamma_n$ be the vertical crosscut of $S$ 
that contains the point  where the $n$th tree is attached.
Let $\Gamma_n$ be the path family connecting $\gamma_0$  
 to $\gamma_n$ inside the closure of the  type I and type II components.
This family has extremal length $\leq Mn$ for some fixed $M$.

 Applying $\cosh$, $\gamma_n$, $n\geq 1$,  maps to an ellipse $E_n$ of 
bounded eccentricity and diameter $\simeq e^n$.  
Also, $\cosh(\Gamma_n)$ is a family of paths
connecting $[0,1]$ to $E_n$. Let $V_n$ be the region bounded
by the ellipse $E_n$,  minus $[-1,1]$ and let $\Sigma_n$ be the path 
family in $V_n$ connecting $[-1,1]$ to $E_n$. Then 
$\Sigma_n$ contains $\cosh(\Gamma_n)$, and $V_n \setminus (-\infty, -1]$
is conformally equivalent to a $2 \pi \times n$ rectangle, so 
if $\lambda $ denotes extremal length we get 
$$   n \simeq \lambda(\Sigma_n) \leq \lambda(\cosh(\Gamma_n))  
= \lambda(\Gamma_n) \simeq n.$$
When we  apply the quasiconformal map $\sigma$,
the extremal length of $\Gamma_n$ is reduced by a factor 
of $K$; this is exactly why we choose $\tau$  as we did. Hence 
 \begin{eqnarray} \label{EL upper}
\lambda (\sigma(\Sigma_n)) \leq \lambda(\sigma(\cosh(\Gamma_n)))  
\leq \frac {Mn}{K}, 
\end{eqnarray}
if  $K$ is large enough.
The eccentricity of the ellipse $E_n$  tends to $1$ 
as $n \nearrow \infty$ so $\sigma(E_n)$ is a $2K$-quasicircle when 
$n$ is large enough.  Let 
$$
R= \max_{\sigma(E_n)} |z|  , \qquad r =  \min_{\sigma(E_n)}    |z|. 
$$
Since $\{ 1\leq |z| \leq r\} \subset \sigma(V_n)$,  we have 
$ \frac 1{2\pi}  \log   r   \leq  \lambda (\sigma(\Sigma_n))
 \leq Mn/K $, 
and hence 
$$ r \leq \exp( 2 \pi M n /K) \leq   \exp(n/2) $$
if $K \geq 4 \pi M$. Quasiconformal maps are 
quasisymmetric, so we have $R \leq C_K \cdot r$ for some constant 
depending only on $K$ and hence $ R   \leq C_K \cdot  e^{n/2} . $
If  $z \in V_n \setminus V_{n-1}$, then 
$$ |\sigma(z)| \leq R= O(e^{n/2}) = O(e^{(n-1)/2}) = O(\sqrt{|z|}),
$$
which  is (\ref{sqrt root}).  This  proves 
Theorem \ref{Order conj}, i.e., 
the order conjecture fails in $\classS$.

\bibliography{order}
\bibliographystyle{plain}

\end{document}